
\documentclass[final]{siamltex}

\usepackage{epsfig}
\usepackage{graphicx}
\usepackage{makeidx}
\usepackage{multicol}

\usepackage{crop}
\crop
\makeindex

%
%\documentclass[final]{siamltex}
%
%% definitions used by included articles, reproduced here for 
%% educational benefit, and to minimize alterations needed to be made
%% in developing this sample file.
%
%\newcommand{\pe}{\psi}
%\def\d{\delta} 
%\def\ds{\displaystyle} 
%\def\e{{\epsilon}} 
%\def\eb{\bar{\eta}}  
%\def\enorm#1{\|#1\|_2} 
%\def\Fp{F^\prime}  
%\def\fishpack{{FISHPACK}} 
%\def\fortran{{FORTRAN}} 
%\def\gmres{{GMRES}} 
%\def\gmresm{{\rm GMRES($m$)}} 
%\def\Kc{{\cal K}} 
%\def\norm#1{\|#1\|} 
%\def\wb{{\bar w}} 
%\def\zb{{\bar z}} 
%
%% some definitions of bold math italics to make typing easier.
%% They are used in the corollary.
%
%\def\bfE{\mbox{\boldmath$E$}}
%\def\bfG{\mbox{\boldmath$G$}}
%
%
%
%
%%%%%%%%%%%%%%%%%%%%%%%%%%%%%%%%%%%%%%%%%%%%%  Moje %%%%%%%%%%%%%%%%%%%%%%%%%%%%%%%%%%%%%%%%%%%%
%

\usepackage{pdfsync}

\usepackage{amsmath} % assumes amsmath package installed
\usepackage{amssymb}  % assumes amsmath package installed
\newtheorem{Satz}{Theorem} %for model ~(\ref{OekosystemModell}
\newtheorem{Lemm}{Lemma} %for model ~(\ref{OekosystemModell}
 %for model ~(\ref{OekosystemModell}
\newtheorem{Def}{Definition}
\newtheorem{remark}[Lemm]{Remark}

\newcommand{\Simp}{\ensuremath{\mathcal{S}}}

\newcommand \eps   {\varepsilon}

\newcommand \N   {\mathbb{N}}
\newcommand \R   {\mathbb{R}}

\newcommand \K   {\mathcal{K}}
\newcommand \Kinf{\mathcal{K_\infty}}
\newcommand \KL  {\mathcal{KL}}
\newcommand \LL  {\mathcal{L}}

\newcommand \PD   {\mathcal{P}}

\newcommand \SSet   {\mathcal{S}}

% definitions used by included articles, reproduced here for 
% educational benefit, and to minimize alterations needed to be made
% in developing this sample file.

% some definitions of bold math italics to make typing easier.
% They are used in the corollary.

%
%\begin{document}
%
%\chapter[Dwell-time conditions for robust stability of impulsive systems]%
%{Dwell-time conditions for robust stability of impulsive systems
%%Input-to-state stability of impulsive systems
%\thanks{
%This research is funded by the DFG as a part of Collaborative Research Centre 637 "Autonomous Cooperating Logistic Processes - A Paradigm Shift and its Limitations".
%}
%}
%%\index{Sample!file}
%
%\begin{authorline}
%S. Dashkovskiy\thanks{S. Dashkovskiy is with 
%Erfurt University of Applied Sciences, Altonaer Stra\ss e 25, 99085 Erfurt, Germany
%({\tt\small sergey.dashkovskiy@fh-erfurt.de})}, 
%A. Mironchenko \thanks{A. Mironchenko is with Department of Mathematics and Computer Science, University of Bremen, Bibliothekstra\ss e 1, 28359 Bremen, Germany
%(corresponding author, {\tt\small andmir@math.uni-bremen.de})}
%\end{authorline}

%%%%%%%%%%%%%%%%%%%%%%%%%%%%%%%%%%%%%%%%%%%%  Moje %%%%%%%%%%%%%%%%%%%%%%%%%%%%%%%%%%%%%%%%%%%%

\title{Dwell-time conditions for robust stability of impulsive systems  
\thanks{This research is funded by the German Research Foundation (DFG) as a part of Collaborative Research Centre 637 "Autonomous Cooperating Logistic Processes - A Paradigm Shift and its Limitations".}
}

% The thanks line in the title should be filled in if there is
% any support acknowledgement for the overall work to be included
% This \thanks is also used for the received by date info, but
% authors are not expected to provide this.

%\author{TeX Production\thanks{Composition Department, Society
        %for Industrial and Applied Mathematics, 3600 Univeristy 
        %City Science Center, Philadelphia, Pennsylvania, 
        %19104-2688 ({\tt tex@siam.org}).}
        %\and Various A.~U. Thors\thanks{Various Affiliations, 
        %supported by various foundation grants.}}
%
%

\author{Sergey Dashkovskiy 
\thanks{S. Dashkovskiy is with 
Erfurt University of Applied Sciences, Altonaer Stra\ss e 25, 99085 Erfurt, Germany
({\tt\small sergey.dashkovskiy@fh-erfurt.de})}%
\and Andrii Mironchenko
\thanks{A. Mironchenko is with Department of Mathematics and Computer Science, University of Bremen, Bibliothekstra\ss e 1, 28359 Bremen, Germany
(corresponding author, {\tt\small andmir@math.uni-bremen.de})}%
}

\begin{document}

\maketitle

\begin{abstract}
We prove that impulsive systems, which possess an ISS Lyapunov function, are ISS for impulse time sequences, which satisfy the fixed dwell-time condition. 

If the ISS Lyapunov function is the exponential one, we provide stronger result, which guarantees uniform ISS of the whole system over sequences of impulse times, which satisfy the generalized average dwell-time condition.
\end{abstract}

\begin{keywords} 
impulsive systems, nonlinear control systems, input-to-state stability, Lyapunov methods
\end{keywords}

\begin{AMS}
34A37, 93D30, 93C10
\end{AMS}

\pagestyle{myheadings}
\thispagestyle{plain}
%\markboth{TEX PRODUCTION AND V. A. U. THORS}{SIAM MACRO EXAMPLES}

\section{Introduction}

In the modeling of the real phenomena often one has to consider systems, which exhibit both continuous and discontinuous behavior.
The general framework for modeling of such phenomena is a hybrid systems theory 
\cite{HCN06}. %, \cite{GST09}.
Impulsive systems are hybrid systems, in which the jumps occur only at certain moments of time, which do not depend on the state of the system.
The first monograph devoted entirely to impulsive systems is \cite{SaP95}. Recent developments in this field one can find, in particular, in \cite{HCN06}.
%, \cite{Sta09}.

We are interested in stability of the systems with respect to external inputs.  The central concept in this theory is a notion of input-to-state stability (ISS), introduced by E. Sontag in \cite{Son89}, for a survey on related results see \cite{Son08}, \cite{DES11}.

Input-to-state stability of impulsive systems has been investigated in recent papers
\cite{HLT08} (finite-dimensional systems) and \cite{ChZ09} (time-delay systems). 
The stability of interconnections of impulsive systems has been studied in \cite{DKM11}.
The most interesting case is when either continuous or discrete dynamics destabilizes the system. In this case in order to achieve ISS of the system one has to impose restrictions on the density of impulse times, which are called dwell-time conditions.

In \cite{HLT08} it was proved that impulsive systems, which possess an exponential ISS-Lyapunov function are uniformly ISS over impulse time sequences, which satisfy so-called average dwell-time (ADT) condition.
In \cite{ChZ09} a sufficient condition in terms of Lyapunov-Razumikhin functions is provided, which ensures the uniform ISS of impulsive time-delay systems over impulse time sequences, which satisfy fixed dwell-time (FDT) condition (for the difference between ADT and FDT condition see \cite{HeM99b}).

In both papers only exponential ISS-Lyapunov functions have been considered. This imposes serious restrictions on the class of systems, to which the results can be applied. 
Another restrictions arise in the study of interconnections of ISS impulsive systems via small-gain theorems, which are an important tool for construction of ISS-Lyapunov functions for the interconnected systems \cite{JMW96}, \cite{DRW10}. Even if the ISS-Lyapunov functions for all subsystems are exponential, the ISS Lyapunov function of the interconnection may be non-exponential, if the gains are nonlinear. Hence for the most cases there is no tool to check ISS of an interconnection of impulsive systems.

In this paper we prove, that existence of an ISS Lyapunov function (\textit{not necessarily exponential}) for an impulsive system implies the input-to-state stability of the system over the impulsive sequences satisfying \textit{nonlinear FDT} condition. 
%Under slightly weaker FDT condition we prove the uniform global stability of the system over corresponding class of impulse time sequences.

For the case, when the impulsive system possesses an exponential ISS-Lyapunov function, we
generalize the result from \cite{HLT08}, by introducing the \textit{generalized average dwell-time (gADT) condition} and proving, that the impulsive system, which possesses an exponential ISS Lyapunov function is uniformly ISS over impulse time sequences, which satisfy the gADT condition. We argue, that generalized ADT condition provides in certain sense tight estimates of the class of impulsive time sequences, for which the system is ISS.

The structure of the paper is as follows.
In Section \ref{Prelim} we provide notation and main definitions.
In Section \ref{ISS_Single_Sys} the sufficient conditions for ISS of impulsive systems via ISS Lyapunov functions are proved.
Section \ref{Schluss} concludes the paper.

\section{Preliminaries}
\label{Prelim}

Let $T = \{t_1,\ t_2,\ t_3, \ldots \}$ be a monotonically increasing sequence of impulse times without finite accumulation points.
Consider a system of the form
\begin{equation}
\label{ImpSystem}
\left \{
\begin {array} {l}
{ \dot{x}(t)=f(x(t),u(t)),\quad t \in [t_0,\infty)  \backslash T,} \\
{ x(t)=g(x^-(t),u^-(t)),\quad t \in T,}
\end {array}
\right.
\end {equation}
where
% $x:[t_0,\infty) \to \R^n$ is absolutely continuous between the impulses and 
$f,g: \R^n \times \R^m \to \R^n$.

The first equation of \eqref{ImpSystem} describes the continuous dynamics of the system, and the second describes the jumps of a state at the impulse times.

We assume that $u \in L_{\infty}([t_0,\infty),\R^m)$ and that all inputs possess left limits of $u$ at all times and that they are right-continuous: for all $t \geq t_0$ it holds $u^-(t) = \lim\limits_{s \to t-0}u(s)$.
We endow this space with the supremum norm, which we denote by $\|\cdot\|_{\infty}$.

We assume that $f$ is locally Lipschitz w.r.t. the first argument in order to guarantee existence and uniqueness of solutions of the problem \eqref{ImpSystem}.

Note that from the assumptions on the inputs $u$ it follows that $x:[t_0,\infty) \to \R^n$ is absolutely continuous between the impulses, and $x^-(t) = \lim\limits_{s \to t-0}x(s)$ exists for all $t \geq t_0$.

Equations \eqref{ImpSystem} together with the sequence of impulse times $T$ define an impulsive system.

We need the following classes of functions
\begin{equation}\notag
\begin{array}{ll}
{\PD} &:= \left\{\gamma:\R_+\rightarrow\R_+\left|\ \gamma\mbox{ is continuous, } 
\right. \gamma(0)=0\ \mbox{and }  \gamma(r)>0 \mbox{ for } r>0 \right\} \\
{\K} &:= \left\{\gamma \in \PD \left|\ \gamma \mbox{ is strictly increasing}  \right. \right\}\\
{\K_{\infty}}&:=\left\{\gamma\in\K\left|\ \gamma\mbox{ is unbounded}\right.\right\}\\
{\LL}&:=\left\{\gamma:\R_+\rightarrow\R_+\left|\ \gamma\mbox{ is continuous and strictly}\right. \text{decreasing with } \lim\limits_{t\rightarrow\infty}\gamma(t)=0\right\}\\
{\KL} &:= \left\{\beta:\R_+\times\R_+\rightarrow\R_+\left|\ \right. \beta(\cdot,t)\in{\K},\ \forall t \geq 0,\  \beta(r,\cdot)\in {\LL},\ \forall r >0\right\}
\end{array}
\end{equation}
%Functions of class $\PD$ are called positive definite functions.
Denote the Euclidean norm in spaces $\R^k$ by $|\cdot|$ and $\N:=\{1,2,3,\ldots\}$.

We are interested in stability of the system \eqref{ImpSystem} w.r.t. external inputs. To this end we use the following notion:
%\vspace{-3mm}
\begin{Def}
\label{ISS_One_System}
For a given sequence $T$ of impulse times we call system \eqref{ImpSystem} \textit{ input-to-state stable (ISS)} if there exist $\beta\in\KL,\ \gamma\in\K_{\infty}$, such that for all initial conditions $x_0$, for all inputs $u$, $\forall t\geq t_0$ it holds
\begin{equation}
\label{ISS_ImpSys}
|x(t)| \leq \max\{\beta(|x_0|,t-t_0),\gamma(\|u\|_{\infty})\}.
\end{equation}
The impulsive system \eqref{ImpSystem} is \textit{uniformly ISS} over a given set $\Simp$ of admissible sequences of impulse times if it is ISS for every sequence in $\Simp$, with $\beta$ and $\gamma$ independent of the choice of the sequence from the class $\Simp$.
\end{Def}

In the next section we are going to find the sufficient conditions for an impulsive system \eqref{ImpSystem} to be ISS.

\section{ISS of an impulsive system}
\label{ISS_Single_Sys}

For analysis of ISS of impulsive systems we exploit ISS-Lyapunov functions.
\begin{Def}
\label{ISS_Imp_LF}
A smooth function $V:\R^n \to \R_+$ is called an \textit{ISS-Lyapunov function} for \eqref{ImpSystem} if $\exists \ \psi_1,\psi_2 \in \Kinf$, such that
\begin{equation}
\label{LF_ErsteEigenschaft}
\psi_1(|x|) \leq V(x) \leq \psi_2(|x|), x \in \R^n
\end{equation}
holds and $\exists \chi \in \Kinf$, $\alpha \in \PD$ and continuous function $\varphi:\R_+ \to \R$, $\varphi(0)=0$  such that 
$\forall x \in \R^n$, $\forall u \in \R^m$ it holds
\begin{equation}
\label{ISS_LF_Imp}
V(x)\geq\chi(|u|)\Rightarrow 
\left \{
\begin {array} {l}
{ \dot{V}(x)= \nabla V \cdot f(x,u) \leq - \varphi(V(x))} \\
{ V(g(x,u))\leq  \alpha(V(x)).}
\end {array}
\right.
\end{equation}
%For a given input value $u \in U$ Lie derivative $\dot{V}(x)$ is defined by 
%\begin{equation}
%\label{LyapAbleitung_2}
%\dot{V}(x)=\mathop{\overline{\lim}} \limits_{t \rightarrow +0} {\frac{1}{t}(V(\phi(t,0,x,u))-V(x)) }.
%\end{equation}
%If $D=X$ and \eqref{ISS_LF_Imp} holds for all $x \in X$ and $u \in U$, then $V$ is called \textit{ISS-Lyapunov function}.
If in addition $\varphi(s) = c s$ and $\alpha(s) = e^{-d} s$ for some $c,d \in \R$, then $V$ is called \textit{exponential ISS-Lyapunov function with rate coefficients} $c,d$.
\end{Def}
Note that our definition of ISS-Lyapunov function is given in an implication form. The dissipation form is used, e.g., in \cite{HLT08}.

%\begin{Def}
%\label{ISS_Imp_LF}
%A smooth function $V:X \to \R_+$ is an exponential ISS-Lyapunov function for \eqref{ImpSystem} with rate coefficients $c, d \in \R$ if $\exists \ \psi_1,\psi_2 \in \Kinf$ such that
%\begin{equation}
%\label{LF_ErsteEigenschaft}
%\psi_1(\|x\|_X) \leq V(x) \leq \psi_2(\|x\|_X)
%\end{equation}
%holds and $\exists \gamma \in \Kinf$, such that for all $x \in X$ and $u \in U$ holds
%\begin{equation}
%\label{ISS_LF_Imp}
%V(x)\geq\gamma(|u|)\Rightarrow 
%\left \{
%\begin {array} {l}
%{ \dot{V}(x) \leq -cV(x)} \\
%{ V(g(x,u))\leq e^{-d}V(x).}
%\end {array}
%\right.
%\end{equation}
%
%For a given input $u \in U$ Lie derivative $\dot{V}(x)$ is defined by 
%\begin{equation}
%\label{LyapAbleitung_2}
%\dot{V}(x)=\mathop{\overline{\lim}} \limits_{t \rightarrow +0} {\frac{1}{t}(V(\phi(t,0,x,u))-V(x)) }.
%\end{equation}
%\end{Def}

We provide Lyapunov type sufficient condition for the system \eqref{ImpSystem} to be ISS. 
The FDT condition \eqref{Gen_Dwell-Time} is taken from \cite{SaP95}, where it was used to guarantee global asymptotic stability of the system without inputs.

Assume, that $x\equiv 0$ is an equilibrium of an unforced system \eqref{ImpSystem}, that is $f(0,0)=g(0,0)=0$.

Define $S_{\theta}:=\{  \{t_i\}_1^{\infty} \subset [t_0,\infty) \ : \ t_{i+1}-t_i \geq \theta,\ \forall i \in \N \}$.

%\vspace{-2mm}

\begin{Satz}
\label{NonLin_DwellTime_Satz}
Let $V$ be an ISS-Lyapunov function for \eqref{ImpSystem} and $\varphi,\alpha$ are as in the Definition \ref{ISS_Imp_LF} and $\varphi \in \PD$. Let for some $\theta, \delta >0$ and all $a>0$ it holds 
\begin{equation}
\label{Gen_Dwell-Time}
\int_a^{\alpha(a)} {\frac{ds}{\varphi(s)}} \leq \theta - \delta.
\end{equation}
Then \eqref{ImpSystem} is ISS for all impulse time sequences $T \in S_{\theta}$. 
\end{Satz}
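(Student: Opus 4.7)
The plan is to reduce the analysis to a scalar comparison argument for $v(t) := V(x(t))$, with the ODE $\dot y = -\varphi(y)$ acting as the continuous comparison flow between impulses and $\alpha$ as the jump map. Denote by $\phi(\cdot, a)$ the solution of $\dot y = -\varphi(y)$, $y(0) = a$; since $\varphi \in \PD$ is continuous and strictly positive off $0$, $\phi(\cdot, a)$ is defined on $[0,\infty)$, strictly decreasing, with $\phi(t,a) \to 0$ as $t \to \infty$. The change of variables $s = \phi(\tau, \alpha(a))$ identifies the integral in \eqref{Gen_Dwell-Time} as the time needed for $\phi$ to descend from $\alpha(a)$ down to $a$, so the hypothesis is equivalent to $\phi(\theta-\delta, \alpha(a)) \le a$ for every $a > 0$. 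Combined with the semiflow property and monotonicity of $\phi(\delta, \cdot)$, this gives
\[
  \phi(\theta, \alpha(a)) \;=\; \phi\bigl(\delta,\, \phi(\theta-\delta, \alpha(a))\bigr) \;\le\; \phi(\delta, a) \;<\; a,
\]
so one full inter-impulse interval of length $\ge \theta$ strictly contracts the worst-case jump increase.

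Fix an input $u$ and set $c := \chi(\|u\|_\infty)$. On any interval $[t_i, t_{i+1})$ on which $v \ge c$, the Lyapunov inequalities and the comparison principle yield $v(t) \le \phi(t-t_i, \alpha(a_i))$ with $a_i := v(t_i^-)$; evaluating at $t = t_{i+1}^-$ and using $t_{i+1}-t_i \ge \theta$ together with the estimate above produces the discrete recursion $a_{i+1} \le \phi(\delta, a_i)$, hence $a_i \le \phi(i\delta, V(x_0))$ by induction. Since $\phi$ is itself a $\KL$-function of $(a,t)$, and since $v$ on each flow interval lies between $\alpha(a_i)$ and $a_{i+1}$, a standard interpolation produces a single $\hat\beta \in \KL$, depending only on $\varphi, \alpha, \theta, \delta$, with $v(t) \le \hat\beta(V(x_0), t-t_0)$ uniformly in $T \in S_\theta$, valid on the time window before $v$ first meets $c$.

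The ISS estimate is completed by showing that once $v$ enters $\{v \le c\}$ it remains in a bounded absorbing set determined by $\alpha$ and $c$: the continuous flow cannot exit $\{v \le c\}$ since $\dot v < 0$ on its boundary, and any jump whose pre-impulse value already lies above $c$ is controlled via $v(t_i) \le \alpha(v(t_i^-))$. Combining both regimes and inverting the sandwich \eqref{LF_ErsteEigenschaft} yields \eqref{ISS_ImpSys} with $\beta$ built from $\hat\beta, \psi_1, \psi_2$ and $\gamma = \psi_1^{-1} \circ \alpha \circ \chi$. The principal delicacy I foresee is precisely this absorbing-set step: the implication form of \eqref{ISS_LF_Imp} gives no direct control on $V(g(x,u))$ when $V(x) < \chi(|u|)$, so one must carefully describe the orbit geometry near the level $c$ and rule out uncontrolled excursions from the absorbing set — this observation, together with the identification of \eqref{Gen_Dwell-Time} as a return-time condition for the comparison flow, is the key structural input that turns the scalar comparison into a true ISS bound.
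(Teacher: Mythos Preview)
Your approach coincides with the paper's: the paper encodes the comparison flow via $F(q)=\int_r^q \frac{ds}{\varphi(s)}$, so that your $\phi(t,a)=F^{-1}(F(a)-t)$ and your recursion $a_{i+1}\le\phi(\delta,a_i)$ is exactly the paper's $y(t_{i+1})\le F^{-1}(F(y(t_i))-\delta)$; the absorbing-set step is carried out via nested sublevel sets $I_1\subset I_2$ with $I_2$ defined through $\tilde\alpha(r):=\max\bigl\{\max_{0\le s\le\chi(r)}\alpha(s),\,\chi(r)\bigr\}$, which is precisely how the paper resolves the delicacy you flag. One small correction to your outline: since $\alpha$ is only assumed to lie in $\PD$ (neither monotone nor necessarily satisfying $\alpha(s)\ge s$), the gain should be $\gamma=\psi_1^{-1}\circ\tilde\alpha$ rather than $\psi_1^{-1}\circ\alpha\circ\chi$.
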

\begin{proof}
Fix an arbitrary admissible input $u$, $\phi_0 \in \R^n$
%, initial time $t_0$ 
and choose the sequence of impulse times $T=\{t_i\}_{i=1}^{\infty}$, $T \in S_{\theta}$. 

Define a function $\tilde{\alpha}: \R_+ \to \R_+$ by
\[
\tilde{\alpha}(x):= \max\{ \max_{ 0 \leq s \leq \chi(x) } \alpha(s), \chi(x)\},\quad x \in \R_+.
\]
Let us introduce the sets
%\[
%I_1:=\{x \in \R^n: V(x) \leq \chi(\|u\|_{\infty})\}, \quad
%I_2:=\{x \in \R^n: V(x) \leq \tilde{\alpha}(\|u\|_{\infty})\} \supseteq I_1.
%\]
\[
I_1:=\{x \in \R^n: V(x) \leq \chi(\|u\|_{\infty})\},
\]
\[
I_2:=\{x \in \R^n: V(x) \leq \tilde{\alpha}(\|u\|_{\infty})\} \supseteq I_1.
\]
For the sake of brevity we denote 
%$x(\cdot)=\phi(\cdot,t_0,\phi_0,u)$ and 
$y(\cdot):=V(x(\cdot))$.

At first take arbitrary $t_i,\ t_{i+1} \in T$ and assume that $x(t) \notin I_1$ for all $t \in [t_i,t_{i+1}]$. We are going to find an estimate of $y$ on $[t_i,t_{i+1}]$.

From $x(t) \notin I_1$ for all $t \in [t_i,t_{i+1}]$ it follows, that
$y(t) \geq \chi(\|u\|_{\infty}) \text{ for } t \in [t_{i},t_{i+1}]$,
%\begin{equation*}
%%\label{GegebeneSchranke1}
%y(t) \geq \chi(\|u\|_{\infty}) \text{ for } t \in [t_{i},t_{i+1}],
%\end{equation*}
which by \eqref{ISS_LF_Imp} implies 
%$\dot{y}(t) \leq -\varphi (y(t)), t \in (t_{i},t_{i+1})$.
\[
 \dot{y}(t) \leq -\varphi (y(t)), t \in (t_{i},t_{i+1}).
\]
Integrating, we obtain
\begin{equation}
\label{HilfUngl_1}
\int_{t_i}^{t} \frac{dy(\tau)}{\varphi(y(\tau))} \leq - (t-t_{i}), \quad t \in (t_{i},t_{i+1}).
\end{equation}
Fix any $r>0$ and define $F(q)=\int_{r}^{q} \frac{ds}{\varphi(s)}.$
%\[
%F(q)=\int_{r}^{q} \frac{ds}{\varphi(s)}.
%\]
%Note that for $q>r$ function $F$ is a continuous increasing function.
Note that $F:(0,\infty) \to \R$ is a continuous increasing function. Thus, it is invertible on $(0,\infty)$ and $F^{-1}:\R \to (0,\infty)$ is also an increasing function.

Changing variables in \eqref{HilfUngl_1}, we can rewrite \eqref{HilfUngl_1} as
\begin{equation}
\label{HilfUngl_2}
F(y(t)) - F(y(t_i) ) \leq - (t-t_{i}).
\end{equation}
Consequently, for $t \in [t_i,t_{i+1})$ it holds
\begin{equation}
\label{V_Abschaetzung}
y(t) \leq F^{-1}\left( F(y(t_i)) - (t-t_{i}) \right).
\end{equation}
Taking in \eqref{HilfUngl_2} limit when $t \to t_{i+1}$ and recalling that $t_{i+1}-t_i \geq \theta$, we obtain
\begin{equation}
\label{HilfUngl_3}
F(y^-(t_{i+1})) - F(y(t_i)) \leq - \theta.
\end{equation}
Now we estimate
\[
F(y(t_{i+1})) - F(y(t_i)) \leq \left( F(\alpha(y^-(t_{i+1})))- F(y^-(t_{i+1})) \right)
+ \left( F(y^-(t_{i+1})) - F(y(t_i)) \right).
\]
%\[
 %+ \left( F(y^-(t_{i+1})) - F(y(t_i)) \right).
%\]
From \eqref{Gen_Dwell-Time} and \eqref{HilfUngl_3} we obtain 
\[
F(y(t_{i+1})) - F(y(t_i)) \leq (\theta - \delta) - \theta = - \delta.
\]
From this inequality we have
\begin{equation}
\label{Lyap_Abschaetzung}
y(t_{i+1}) \leq F^{-1} ( F(y(t_i))- \delta ).
\end{equation}
In particular, $y(t) < y(t_i)$, $t \in (t_i,t_{i+1}]$.

Define function $\tilde{\beta}$ by:
%\[
%\tilde{\beta} (r,0) = F^{-1}( F(r) ) = \alpha(r)
%\]
%\[
%\tilde{\beta} (r,t_{i}-t_0) = F^{-1}( F(y(t_{i})) - \delta),
%\]
%where $y(\cdot)$ corresponds to $y(t_0)=r$.
\[
\tilde{\beta} (r,0) = \max\{r,\alpha(r)\}, \quad \tilde{\beta} (r,t_{i}-t_0) = \underbrace{\zeta \circ \ldots \circ \zeta}_{i \mbox{ times}}  (\tilde{\beta} (r,0)),
\]
%\[
%\tilde{\beta} (r,t_{i}-t_0) = \underbrace{\zeta \circ \ldots \circ \zeta}_{i \mbox{ times}}  (\tilde{\beta} (r,0)),
%\]
where $\zeta(r)=F^{-1}( F(r) -\delta)$.

For $t \in [t_i,t_{i+1})$ define $\tilde{\beta}(r,\cdot-t_0)$ as a continuous decreasing function, such that 
\[
\tilde{\beta} (r,t-t_0) \geq F^{-1}( F(y(t)) - (t-t_{i})),
\]
where $y(\cdot)$ is a trajectory, corresponding to $y(t_i)=\tilde{\beta}(r, t_i-t_0)$.
By construction, for all $t \geq t_0$ it holds
\[
y(t) \leq \tilde{\beta} (y_0,t-t_{0}).
\]
and $\tilde{\beta}$ is continuous, increasing w.r.t. the first argument and decreasing w.r.t. the second. We are going to prove, that for all $r \geq 0$ $\tilde{\beta}(r,t) \to 0$ as soon as $t \to \infty$.
To prove this it is enough to prove, that $z(t_i) := \tilde{\beta}(r,t_i-t_0) \to 0$, when $i \to \infty$.

Let it be false, then for some $r>0$ $\exists \lim\limits_{i \to \infty} z(t_i)= b \neq 0$.
Define $c=\min_{b \leq s \leq z(0)} \varphi(s)$ and observe by the middle-value theorem that
\begin{eqnarray*}
\delta  \leq  F(z(t_{i})) - F(z(t_{i+1})) = \int_{z(t_{i+1})}^{z(t_{i})} \frac{ds}{\varphi(s)}  \leq  \frac{1}{c} (z(t_{i}) - z(t_{i+1})).
\end{eqnarray*}
%\begin{eqnarray*}
%\delta & \leq & F(z(t_{i})) - F(z(t_{i+1})) = \int_{z(t_{i+1})}^{z(t_{i})} \frac{ds}{\varphi(s)} \\
 %& \leq & \frac{1}{c} (z(t_{i}) - z(t_{i+1})).
%\end{eqnarray*}
Hence $z(t_{i}) - z(t_{i+1}) \geq c \delta,$
%\[
%z(t_{i}) - z(t_{i+1}) \geq c \delta,
%\]
and the sequence $z(t_{i})$ does not converge, which leads to a contradiction.

Function $\tilde{\beta}$ satisfies all properties of $\KL$-functions, apart from continuity w.r.t. the first argument. But then we can always majorize it by another $\KL$-function, therefore to keep the notation simpler we assume, that $\tilde{\beta} \in \KL$. Hence
\[
V(x(t)) \leq \tilde{\beta}(V(\phi_0),t-t_0), \quad \forall t:\ x(t) \notin I_1.
\]
Let $t^*:=\min \{t: x(t) \in I_1\}$. 
From \eqref{LF_ErsteEigenschaft} we obtain
\begin{equation}
\label{Beta_Abschaetzung}
|x(t)| \leq \beta(|\phi_0|,t-t_0),\quad t \leq t^*,
\end{equation}
where $\beta(r,t)=\psi^{-1}_1( \tilde{\beta}(\psi_2(r),t))$.

 Now let us prove, that $x(t) \in I_2$ for all $t \geq t^*$.

At first note that a trajectory can leave $I_1$ only by a jump. 
If $\|u\|_{\infty}=0$, then $I_1$ is invariant under continuous dynamics, because $x \equiv 0$ is an equilibrium of \eqref{ImpSystem}. 
Let $\|u\|_{\infty}~>~0$ and let for some $t>t^*$ we have $x(t) \in \partial I_1$, i.e. $y(t)=\chi(\|u\|_{\infty})$. 
Then according to the inequality \eqref{ISS_LF_Imp} it holds $\dot{y}(t) \leq -\varphi(y(t)) < 0$ and thus $y(\cdot)$ cannot leave $I_1$ at time $t$.

Now let for some $t_k \in T$, $t_k \geq t^*$ it holds $x(t_k) \notin I_1$ and for some $\eps >0$ $x(t) \in I_1$ for all $t \in (t_k-\eps,t_k)$. 
Then $x(t_k) \in I_2$ by construction of the set $I_2$.

But we have proved, that $y(t) < y(t_k)$ as long as $t>t_k$ and $x(t) \notin I_1$. Consequently, $x(t) \in I_2$ for all $t > t^*$. 

Thus, for $t> t^*$ it holds $V(x(t)) \leq \tilde{\alpha}(\|u\|_{\infty})$
%\[
%V(x(t)) \leq \tilde{\alpha}(\|u\|_{\infty})
%\]
or equivalently
% $|x(t)| \leq \psi^{-1} (\tilde{\alpha}(\|u\|_{\infty})):= \tilde{\gamma} (\|u\|_{\infty}).$
\[
|x(t)| \leq \psi^{-1} (\tilde{\alpha}(\|u\|_{\infty})):= \tilde{\gamma} (\|u\|_{\infty}).
\]
Function $\tilde{\gamma}$ is positive definite and nondecreasing, thus, it may be always majorized by the $\K$-function $\gamma$. 
Recalling \eqref{Beta_Abschaetzung} we obtain that \eqref{ISS_ImpSys} holds
$\forall t \geq t_0$.
 %it holds
%\begin{equation}
%\label{ISS_Abschaetzung}
%|x(t)| \leq \beta(|\phi_0|,t-t_0) + \gamma(\|u\|_{\infty}).
%\end{equation}
\end{proof}

\begin{remark}
Note, that if the discrete dynamics does not destabilize the system, i.e. $\alpha(a) \leq a$ for all $a \neq 0$, then the integral in the right hand side of \eqref{Gen_Dwell-Time} is non-positive for all $a \neq 0$, and so the dwell-time condition \eqref{Gen_Dwell-Time} always holds, which means, that the system is ISS for all impulse time sequences.
\end{remark}

\subsection{Example}
We illustrate the application of our theorem on an academical example.
Let $T$ be an impulse time sequence. Consider the system $\Sigma$, defined by
\begin{eqnarray}
\label{Beisp_NichtLin}
\left\{
\begin{array}{rl}
\dot{x} =& -x^3 + u ,\ t \notin T \\
x(t) = & x^-(t)+ (x^-(t))^3 + u^-(t),\ t \in T.
\end{array}
\right.
\end{eqnarray}
%  :=g(x^-(t),u^-(t))
Consider a function $V:\R \to \R^+$, defined by $V(x)=|x|$.
We are going to prove, that $V$ is an ISS Lyapunov function of the system \eqref{Beisp_NichtLin}.\\
The Lyapunov gain $\chi$ we choose by $\chi(r)=\left( \frac{r}{a}\right)^{\tfrac{1}{3}}$, $r \in \R^+$, for some $a \in (0,1)$. \\
Condition $|x| \geq \chi(|u|)$ implies
\begin{eqnarray*}
\dot{V}(x) & \leq& -(1-a) (V(x))^3, \\
V(g(x,u)) &\leq & V(x)+(1+a)(V(x))^3.
\end{eqnarray*}
Let us compute the integral in the left hand side of \eqref{Gen_Dwell-Time}:
\begin{eqnarray*}
I(y,a) = \int_y^{y+(1+a)y^3} \frac{dx}{(1-a)x^3} = \frac{1+a}{2(1-a)} \frac{2+(1+a)y^2}{(1+(1+a)y^2)^2} \leq \frac{1+a}{(1-a)}.			
\end{eqnarray*}
%\begin{eqnarray*}
%I(y,a) &=& \int_y^{y+(1+a)y^3} \frac{dx}{(1-a)x^3} \\
       %&=& \frac{1}{2(1-a)} \left( \frac{1}{y^2} - \frac{1}{(y+(1+a)y^3)^2} \right) \\
       %&=& \frac{1+a}{2(1-a)} \frac{2+(1+a)y^2}{(1+(1+a)y^2)^2} \leq \frac{1+a}{(1-a)}.			
%\end{eqnarray*}
For every $\eps>0$ there exist $a_{\eps}$ such that $I(y,a) \leq 1 + 2\eps$. 

Thus, for arbitrary $\eps>0$ we can choose $\theta:=1+\eps$.
Note, that the smaller $\theta$ we take, the larger are the gains. This demonstrates the trade-off between the size of gains and the density of allowable impulse times.

\subsection{Sufficient condition for exponential ISS-Lyapunov functions}

Theorem \ref{NonLin_DwellTime_Satz} can be used, in particular, for the systems, which possess exponential ISS-Lyapunov functions, but for this particular class of systems stronger result can be proved. 

For a given sequence of impulse times denote by $N(t,s)$ the number of jumps within time-span $(s,t]$.
%\vspace{-2mm}
\begin{Satz}
\label{ExpCase}
Let $V$ be an exponential ISS-Lyapunov function for \eqref{ImpSystem} with corresponding coefficients $c \in \R$, $d \neq 0$. For arbitrary function $h:\R_+ \to(0,\infty)$, for which there exist $g \in \LL$: $h(x) \leq g(x)$ for all $x \in \R_+$
consider the class $\SSet[h]$ of impulse time-sequences, satisfying the generalized average dwell-time (gADT) condition:
\begin{equation}
\index{dwell-time condition!generalized average}
\index{generalized ADT}
\label{Dwell-Time-Cond}
-dN(t,s) - c(t-s) \leq  \ln h(t-s), \quad \forall t\geq s \geq t_0.
\end{equation}
%where $N(t,s)$ be a number of jumps during the time-span $[t,s)$.
Then the system \eqref{ImpSystem} is uniformly ISS over $\SSet[h]$.
\end{Satz}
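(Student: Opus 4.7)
The plan is to exploit the exponential form of the Lyapunov inequalities to replace the integral dwell-time estimate of Theorem \ref{NonLin_DwellTime_Satz} with a direct multiplicative bound on $V$, and then reuse the two-phase argument of that proof. Fix $T \in \SSet[h]$, an admissible input $u$, an initial condition $x_0$, and set $y(t) := V(x(t))$, $I_1 := \{x \in \R^n : V(x) \leq \chi(\|u\|_\infty)\}$, and $t^* := \inf\{t \geq t_0 : x(t) \in I_1\}$.

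On the phase $[t_0, t^*)$ one has $y(\tau) > \chi(\|u\|_\infty)$ for all $\tau < t^*$, so \eqref{ISS_LF_Imp} applies and yields $\dot{y} \leq -c\, y$ between jumps and $y(t_k) \leq e^{-d}\, y^-(t_k)$ at the jump times. Iterating these two bounds across the $N(t,t_0)$ jumps in $(t_0,t]$ produces
\[
y(t) \leq y(t_0)\, e^{-c(t-t_0) - d N(t,t_0)}, \qquad t \in [t_0, t^*).
\]
The gADT condition \eqref{Dwell-Time-Cond} bounds the exponent by $\ln h(t-t_0)$, hence $y(t) \leq g(t-t_0)\, y(t_0)$. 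Combined with \eqref{LF_ErsteEigenschaft} this gives $|x(t)| \leq \beta(|x_0|, t-t_0)$ with $\beta(r,s) := \psi_1^{-1}(g(s)\, \psi_2(r)) \in \KL$; since $\beta$ depends only on $h$ and not on $T$, the bound is uniform in $T \in \SSet[h]$.

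On the phase $[t^*,\infty)$ I would mimic the argument of Theorem \ref{NonLin_DwellTime_Satz}, enlarging $I_1$ to $I_2 := \{V \leq \tilde\alpha(\|u\|_\infty)\}$ with $\tilde\alpha(r) := g(0)\, \max\{1, e^{-d}\}\, \chi(r)$. The factor $\max\{1, e^{-d}\}$ absorbs a single jump originating at the boundary of $I_1$, while the factor $g(0)$ absorbs the worst-case growth permitted by the gADT over any subsequent excursion $(s,t]$ outside $I_1$: the same multiplicative iteration applied from the exit moment yields $y(t) \leq g(t-s)\, y(s) \leq g(0)\, y(s)$. Hence $x(t) \in I_2$ for every $t \geq t^*$, so $|x(t)| \leq \psi_1^{-1}(\tilde\alpha(\|u\|_\infty)) =: \tilde\gamma(\|u\|_\infty)$, which one majorizes by a $\Kinf$-function $\gamma$. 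Combining with the first-phase estimate gives \eqref{ISS_ImpSys} uniformly over $\SSet[h]$.

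The main obstacle is the second phase. Unlike in Theorem \ref{NonLin_DwellTime_Satz}, when $c < 0$ the continuous flow can push $y$ across $\partial I_1$ without a jump, so the "escape from $I_1$ occurs only via jumps" argument is no longer available. The gADT condition is precisely what controls any such continuous-plus-discrete excursion through the uniform multiplicative factor $g(0)$, and this is what allows $I_2$ to be closed off and a uniform asymptotic gain $\gamma$ to be extracted.
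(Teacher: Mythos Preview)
Your argument is correct and follows essentially the same route as the paper's proof: the multiplicative iteration $y(t)\le e^{-dN(t,s)-c(t-s)}y(s)$ on intervals where $y\ge\chi(\|u\|_\infty)$, the gADT bound $y(t)\le h(t-s)\,y(s)$, the resulting $\beta(r,s)=\psi_1^{-1}(g(s)\psi_2(r))$, and the gain $\gamma$ built from $\max\{1,e^{-d}\}$ times the supremum of $h$ (the paper writes $C_\lambda=\sup_{x\ge 0}h(x)$ where you use the slightly larger $g(0)$). The paper packages the second phase by explicitly decomposing $[t_0,\infty)$ into alternating intervals $[t^*_{2k},t^*_{2k+1})$ and $[t^*_{2k+1},t^*_{2k+2})$ on which $V$ is respectively above and below $\chi(\|u\|_\infty)$, but this is exactly your excursion argument written out; your observation that the gADT bound (and not the sign of $c$) is what controls excursions out of $I_1$ is the key point either way.
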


%\begin{Satz}
%\label{ExpCase}
%Let $V$ be an exponential ISS-Lyapunov function for \eqref{ImpSystem} with corresponding coefficients $c \in \R$, $d \neq 0$. For arbitrary $\lambda >0$ and 
%arbitrary nondecreasing function $h:\R_+ \to[1,\infty)$, which satisfies a condition $\forall k>0$ $h(x)e^{-kx} \to 0$, if $x \to +\infty$, consider the class $\SSet[ \ln h,\lambda]$ of impulse time-sequences, satisfying the following generalized average dwell-time (gADT) condition:
%\begin{equation}
%\label{Dwell-Time-Cond}
%-dN(t,s) - (c - \lambda)(t-s) \leq  \ln h(t-s), \quad \forall t\geq s \geq t_0.
%\end{equation}
%%where $N(t,s)$ be a number of jumps during the time-span $[t,s)$.
%Then the system \eqref{ImpSystem} is uniformly ISS over $\SSet[ \ln h,\lambda]$.
%\end{Satz}

\begin{proof}
Pick any $h$ as in the statement of the theorem.
Fix arbitrary input $u$, state $\phi_0$, initial time $t_0$ and choose the increasing sequence of impulse times $T=\{t_i\}_{i=1}^{\infty} \in \SSet[h]$. 

%Fix arbitrary admissible input $u$, $\phi_0 \in X$, initial time $t_0$ and choose the increasing sequence of impulse times $T=\{t_i\}_{i=1}^{\infty} \in \SSet[h]$.
% and denote $x(t)= \phi(t,t_0,\phi_0,u)$ for short. 
%Pick any $\lambda,\ h$ as in the statement of the theorem.

Due to the right-continuity of $x(\cdot)$ the time-span $[t_0,\infty)$ can be decomposed into subspans as 
$[t_0,\infty) = \cup_{i=0}^{\infty} [t^*_i,t^*_{i+1})$,
so that $\forall k \in \N \cup \{0\}$ the following inequalities hold
\begin{equation}
\label{GegebeneSchranke1}
V(x(t)) \geq \chi(\|u\|_{\infty}) \text{ for } t \in [t^*_{2k},t^*_{2k+1}),
\end{equation}
\begin{equation}
\label{GegebeneSchranke2}
V(x(t)) < \chi(\|u\|_{\infty})  \text{ for } t \in [t^*_{2k+1},t^*_{2k+2}).
\end{equation}

Let us estimate $V(x(t))$ on the time-interval $I_k=(t^*_{2k},t^*_{2k+1}]$ for arbitrary $k \in \N \cup \{0\}$.
Within timespan $I_k$ there are $r_k:=N(t^*_{2k},t^*_{2k+1})$ jumps at times $t^{k}_1,\ldots, t^{k}_{r_k}$. 
To simplify notation, we denote also $t^k_0:=t^*_{2k}$.
%$t^k_{r_k+1}:=t^*_{2k+1}$ and if $t^*_{2k}$ is not the impulse time then $t^k_0:=t^*_{2k}$. 

For $t \in (t^k_i,t^k_{i+1}]$, $i=0,\ldots,r_k$ we have $V(x(t)) \geq \chi(\|u\|_{\infty})$, thus from \eqref{ISS_LF_Imp} we obtain %$\dot{V}(x(t)) \leq -c V(x(t)),$
\[
 \dot{V}(x(t)) \leq -c V(x(t)),
\]
and so %$V(x^-(t^k_{i+1})) \leq e^{-c(t^k_{i+1}-t^k_{i})} V(x(t^k_{i})).$
\[
V(x^-(t^k_{i+1})) \leq e^{-c(t^k_{i+1}-t^k_{i})} V(x(t^k_{i})).
\]

At the impulse time $t=t^k_{i+1}$ we know from \eqref{ISS_LF_Imp} that 
%$ V(x(t^k_{i+1})) \leq e^{-d} V(x^-(t^k_{i+1}))$
\[
 V(x(t^k_{i+1})) \leq e^{-d} V(x^-(t^k_{i+1}))
\]
and consequently %$V(x(t^k_{i+1})) \leq e^{-d-c(t^k_{i+1}-t^k_{i})} V(x(t^k_{i})).$
\[
V(x(t^k_{i+1})) \leq e^{-d-c(t^k_{i+1}-t^k_{i})} V(x(t^k_{i})).
\]

For all $t \in I_k$ we have the following inequality
\[
V(x(t)) \leq e^{-d\cdot N(t,t^*_{2k}) -c(t-t^*_{2k})} V(x(t^*_{2k})).
\]
Dwell-time condition \eqref{Dwell-Time-Cond} implies
\begin{eqnarray}
\label{IntervallSchranke}
V(x(t)) \leq h(t-t^*_{2k}) V(x(t^*_{2k})).
\end{eqnarray}

Take $t^*:=\inf\{t \geq t_0: V(x(t)) \leq \chi(\|u\|_{\infty})\}$. Let us bound the trajectory on $[t_0,t^*]$ by $\KL$-function.

According to \eqref{IntervallSchranke} on $[t_0,t^*]$ it holds
\begin{eqnarray}
\label{KL_Schranke}
V(x(t)) \leq  h(t-t_0) V(x(\phi_0)).
\end{eqnarray}
According to assumptions of the theorem, $\exists g \in \LL$: $h(x) \leq g(x)$ for all $x \in \R_+$.

Using \eqref{LF_ErsteEigenschaft}, we obtain that $\forall t \in [t_0,t^*]$ it holds
\begin{eqnarray*}
|x(t)| \leq \psi_1^{-1}(g(t-t_0) \psi_2(|\phi_0|))=: \beta(|\phi_0|,t-t_0).
\end{eqnarray*}

On the other hand, on the arbitrary interval of the form $[t^*_{2k+1},t^*_{2k+2})$, $k \in \N \cup \{0\}$ we have already the bound on $V(x(t))$ by \eqref{GegebeneSchranke2}.
Since $t^*_{2k+2}$ can be an impulse time, we have an estimate
%$V(x(t^*_{2k+2})) \leq \max\{1,e^{-d}\} \chi(\|u\|_{\infty}).$
\[
V(x(t^*_{2k+2})) \leq \max\{1,e^{-d}\} \chi(\|u\|_{\infty}).
\]
From the properties of $h$ it follows, that 
 $\exists C_{\lambda}=\sup_{x\geq 0}\{h(x)\} < \infty$.
Hence for arbitrary $t > t^*$  
%$t \in [t^*_{2k+2},t^*_{2k+3})$ 
we obtain with the help of \eqref{IntervallSchranke} an estimate
%$V(x(t)) \leq C_{\lambda} \max\{1,e^{-d}\} \chi(\|u\|_{\infty}).$
\[
V(x(t)) \leq C_{\lambda} \max\{1,e^{-d}\} \chi(\|u\|_{\infty}).
\]

Hence we obtain \eqref{ISS_ImpSys} for all $t \geq t_0$,
%\begin{eqnarray*}
%|x(t)| \leq \max\{\beta(|\phi_0|,t-t_0),\gamma(\|u\|_{\infty})\},
%\end{eqnarray*}
where
%$\beta(r,t)= \psi_1^{-1}(g(t) \psi_2(r))$ and
$\gamma(r)=\psi_1^{-1}(C_{\lambda} \max\{1,e^{-d}\} \chi(r))$.
This proves, that the system \eqref{ImpSystem} is ISS. The uniformity is clear since the functions $\beta$ and $\gamma$ do not depend on the impulse time sequence.
\end{proof}
%\vspace{-6mm}
\begin{remark}
The Theorem \ref{ExpCase} generalizes Theorem 1 from \cite{HLT08}, where this result for constant functions $h \equiv e^{\mu}$ has been proved.
\end{remark}

The condition \eqref{Dwell-Time-Cond} is tight, i.e., if for some sequence $T$ the function $N(\cdot,\cdot)$ does not satisfy the condition \eqref{Dwell-Time-Cond} for every function $h$ from the statement of the Theorem \ref{ExpCase}, then there exists a certain system \eqref{ImpSystem} which is not ISS w.r.t. the impulse time sequence $T$.
This one can see from the simple example. Consider
\begin{equation*}
%\label{Kopplung_N_Systeme_Imp}
\left \{
\begin {array} {l}
\dot{x}=-cx,\\
x(t)=e^{-d}x^-(t)
\end {array}
\right.
\end {equation*}
with initial condition $x(0)=x_0$.
Its solution for arbitrary time sequence $T$ is given by
\[
x(t)=e^{-dN(t,t_0)-c(t-t_0)}x_0.
\]
If $T$ does not satisfy the gADT condition, then $e^{-dN(t,t_0)-c(t-t_0)}$ cannot be estimated from above by $\LL$-function, and consequently, the system under consideration is not GAS.

\section{Conclusion}
\label{Schluss}

In this paper we have proved two Lyapunov-type sufficient conditions, which ensure ISS of an impulsive system. The first theorem states that existence of an ISS-Lyapunov function implies ISS of an impulsive system for the impulse time sequences which satisfy the nonlinear fixed dwell-time condition. The other one states, that 
existence of an \textit{exponential} ISS-Lyapunov function implies \textit{uniform} ISS of an impulsive system for the impulse time sequences which satisfy a generalized average dwell-time condition.

One of the possible directions for a future research is to prove Lyapunov small-gain theorems for ISS of interconnections of impulsive systems, which subsystems possess either exponential or non-exponential ISS-Lyapunov functions.

\bibliographystyle{plain}

%\bibliography{Z:/SFB637/Literatur/literatur}

\bibliography{literatur}

\end{document}